\documentclass[10pt,conference]{IEEEtran}
\usepackage{amsmath,graphicx,cite,bm,amssymb,amsthm,enumerate,epsfig,psfrag,cases,mathtools}

\usepackage{geometry}
 \geometry{
 letterpaper,
 left=20mm,
 right=20mm,
 top=23mm,
 bottom=22mm,
 }

\renewcommand{\(}{\left(}
\renewcommand{\)}{\right)}
\renewcommand{\[}{\left[}
\renewcommand{\]}{\right]}

\renewcommand{\S}{\mathbf{S}}

\newcommand{\W}{\mathbf{W}}

\newcommand{\x}{\mathbf{x}}

\newcommand{\C}{\mathbf{C}}

\newcommand{\A}{\mathbf{A}}

\newcommand{\M}{\mathbf{M}}
\newcommand{\N}{\mathbb{N}}

\newcommand{\X}{\mathbf{X}}
\newcommand{\Y}{\mathbf{Y}}

\newcommand{\Tr}[1]{{\rm{Tr}}\left(#1\right)}

\newcommand{\End}[1]{{\rm{End}}}

\renewcommand{\log}[1]{{\rm{log}}#1}

\newtheorem{lemma}{Lemma}

\newtheorem{definition}{Definition}

\newtheorem{prop}{Proposition}

\newcommand{\norm}[1]{\left\lVert#1\right\rVert}

\usepackage[ruled,vlined,resetcount]{algorithm2e}
\usepackage{algorithmic,subfig}
\usepackage{fontenc}
\usepackage{inputenc}
\usepackage[square,sort,compress,comma,numbers]{natbib}
\usepackage{epstopdf,pst-node}
\usepackage{mathtools}
\usepackage{flushend}

\newcommand{\mypm}{\mathbin{\smash{%
\raisebox{0.35ex}{%
            $\underset{\raisebox{0.5ex}{$\smash -$}}{\smash+}$%
            }%
        }%
    }%
}

\DeclarePairedDelimiter\floor{\lfloor}{\rfloor}

\begin{document}
\title{On the Spectral Norms of \\ Pseudo-Wigner and Related Matrices}

\author{Ilya Soloveychik and Vahid Tarokh, \\ John A. Paulson School of Engineering and Applied Sciences, Harvard University}
\maketitle

\begin{abstract}
We investigate the spectral norms of symmetric $N \times N$ matrices from two pseudo-random ensembles. The first is the pseudo-Wigner ensemble introduced in ``Pseudo-Wigner Matrices'' by Soloveychik, Xiang and Tarokh and the second is its Sample Covariance-type analog defined in this work. Both ensembles are defined through the concept of $r$-independence by controlling the amount of randomness in the underlying matrices, and can be constructed from dual BCH codes. We show that when the measure of randomness $r$ grows as $N^\rho$, where $\rho \in (0,1]$ and $\varepsilon > 0$, the norm of the matrices is almost surely within $o\(\frac{\log^{1+\varepsilon} N}{N^{\min[\rho,2/3]}}\)$ distance from $1$. Numerical simulations verifying the obtained results are provided.
\end{abstract}

\begin{IEEEkeywords}
Pseudo-random matrices, spectral norm, Wigner ensemble, sample covariance matrices.
\end{IEEEkeywords}

\section{Introduction}
Random matrices have been a very active area of research for the last few decades and found enormous applications in various areas of modern mathematics, physics, engineering, biological modeling, and other fields \cite{akemann2011oxford}. In this article, we focus on two types of square symmetric matrices: 1) sign ($\mypm 1$) matrices and 2) Sample Covariance Matrices (SCM) of sign vectors. 

Random square symmetric sign matrices were originally examined by Wigner \cite{wigner1955characteristic}. He proved that if the elements of the upper triangle of an $N \times N$ symmetric matrix (including the main diagonal) are independent Rademacher ($\mypm 1$ with equal probabilities) random variables, then as $N \to \infty$ a properly scaled empirical spectral measure converges to the semicircular law. Wigner originally showed convergence in expectation, which was later improved to convergence in probability \cite{grenander2008probabilities} and to almost sure weak convergence \cite{arnold1967asymptotic}. The spectral behavior of SCMs formed from $p$ independent $N$ dimensional vectors with independent entries and $\frac{p}{N}\to \gamma \in (0,1)$, was for the first time rigorously investigated by Marchenko and Pastur \cite{marchenko1967distribution}. They showed that (actually, under weaker conditions on dependencies among vector entries) the limiting spectrum converges to a non-random law.

In many engineering applications, one needs to simulate random matrices. The most natural way to generate an instance of a random $N \times N$ sign matrix is to toss a fair coin $\frac{N(N+1)}{2}$ times, fill the upper triangular part of a matrix with the outcomes and reflect the upper triangular part into the lower. Similarly, to get a random SCM matrix one would need to toss a coin $pN \approx \gamma N^2$ times. Unfortunately, for large $N$ such approach would require a powerful source of randomness due to the independence condition \cite{gentle2013random}. In addition, when the data is generated by a truly random source, atypical  \textit{non-random looking} outcomes have non-zero probability of showing up. Yet another issue is that any experiment involving tossing a coin would be impossible to reproduce. All these reasons stimulated researchers and engineers from different areas to seek approaches of generating \textit{random-looking} data usually referred to as \textit{pseudo-random} sources or sequences of binary digits \cite{zepernick2013pseudo, golomb1982shift}. A wide spectrum of pseudo-random number generating algorithms have found applications in a large variety of fields including radar, digital signal processing, CDMA, error correction, cryptographic systems, and Monte Carlo simulations, navigation systems, scrambling, coding theory, etc. \cite{zepernick2013pseudo}.

The term \textit{pseudo-random} is used to emphasize that the binary data at hand is indeed generated by an entirely deterministic causal process with low algorithmic complexity, but its statistical properties resemble some of the properties of data generated by tossing a fair coin. Remarkably, most efforts were focused on one dimensional pseudo-random sequences \cite{zepernick2013pseudo, golomb1982shift} due to their natural applications and to the relative simplicity of their analytical treatment. The study of pseudo-random arrays and matrices was launched around the same time \cite{reed1962note, macwilliams1976pseudo, imai1977theory, sakata1981determining}. Among the known two dimensional pseudo-random constructions the most popular are the so-called perfect maps \cite{reed1962note, paterson1994perfect, etzion1988constructions} and two dimensional cyclic codes \cite{imai1977theory, sakata1981determining}. However, none of these works considered spectral properties as the defining statistical features for their constructions.

Specific pseudo-random constructions usually develop from a set of properties mimicking truly random data, and attempt to come up with deterministic ways of reproducing these properties. Following this approach, in \cite{soloveychik2017pseudo} we proposed a framework allowing construction of symmetric sign matrices of low Kolmogorov complexity with spectra converging to the semicircular law. Here we extend the ideas of \cite{soloveychik2017pseudo} to the construction of low complexity SCMs with spectra converging to Marchenko-Pastur law. In a related work \cite{babadi2011spectral}, the authors show that if the columns of matrices are randomly chosen from a properly designed binary code, their spectra converge to Marchenko-Pastur law as is the case for our construction. However all these works do not examine finer characteristics of the proposed matrices. In the present article we go beyond limiting spectral measures. We require more moments of the pseudo-random construction to match those of the truly random ensembles which enables us to capture the behavior of the extreme eigenvalues (spectral norm). As a tradeoff, we pay a penalty for that by increased Kolmogorov complexity. We also provide an explicit construction of both Wigner-type and SCM-type ensembles from dual BCH codes and support our theoretical results by numerical simulations.

The outline of this paper is given next. Section \ref{sec:pwe} provides the original truly random ensembles and their properties. In Section \ref{sec:pseudo-rand}, we introduce our pseudo-random ensembles through the concept of $r$-independence and demonstrate that by increasing the amount of randomness involved in their construction, we can mimic finer properties of the true random matrices. The main results about the spectral norms are presented in Section \ref{sec:spect_norm} followed by numerical tests in Section \ref{sec:num}.

\textbf{Notation.} For a real $x,\; \floor*{x}$ stands for the largest integer not exceeding $x$. For a real random variable $X$, we write $F_X(x)$ for its cumulative distribution function (c.d.f.) and $f_X(x)$ for its probability density function (p.d.f.). For two real functions $g(x)$ and $f(x)$ of a real or natural argument, we say $g(x)=o(f(x))$ if $\lim\limits_{x \to +\infty} g/f=0$ and $g(x)=O(f(x))$ if $\lim\limits_{x \to +\infty} g/f < +\infty$.

\section{Random Matrix Ensembles} 
\label{sec:pwe}
Denote the spectrum of a symmetric real matrix $\S_N$ by
\begin{equation}
\lambda_1(\S_N) \leqslant \dots \leqslant \lambda_N(\S_N),
\end{equation}
the c.d.f. associated with it by
\begin{equation}
F_{\S_N}(x) = \frac{1}{N}\sum_{i=1}^N \theta(x-\lambda_i(\S_N)),
\end{equation}
where $\theta(x)$ is the unit step function at zero, and the spectral norm by
\begin{equation}
\norm{\S_N} = \max[|\lambda_1(\S_N)|,|\lambda_N(\S_N)|].
\end{equation}
The $l$-th empirical moment of $\S_N$ reads as
\begin{equation}
\label{eq:mom_def_trace}
\int x^l dF_{\S_N} = \frac{1}{N}\Tr{\S_N^l}.
\end{equation}

Next, we introduce two random ensembles. We will mimic their spectral properties by pseudo-random constructions in Section \ref{sec:pseudo-rand}.

\subsection{Wigner Matrices and the Semicircular Law}
The first rigorous study of a random matrix ensemble was performed by Wigner in his seminal work \cite{wigner1955characteristic, wigner1958distribution}. Wigner's ensemble $\mathcal{W}_N$ is the set $S_N$ of all $N \times N$ matrices with $\mypm \frac{1}{2\sqrt{N}}$ entries endowed with the uniform probability measure. 

Let $F_{W}$ be the c.d.f. of the standard semicircular law with the p.d.f.
\begin{equation}
\label{eq:W_pdf_def}
f_{W}(x) = \begin{cases} \frac{2}{\pi} \sqrt{1-x^2},& -1 \leqslant x \leqslant 1, \\ \qquad 0, & \text{otherwise}. \end{cases}
\end{equation}
The moments of this distribution read as
\begin{equation}
\label{eq:dc_mom}
\mu_{W}(s) = \int_{-\infty}^{+\infty} x^s d F_{W} = \begin{cases} \frac{1}{2^s}\C_{s/2}, & s \text{ even}, \\ \quad 0, & s \text{ odd},\end{cases}
\end{equation}
where
\begin{equation}
\C_{s/2} = \frac{s!}{\(\frac{s}{2}\)!\(\frac{s}{2}+1\)!}
\end{equation}
are Catalan numbers. Stirling's approximation yields
\begin{equation}
\frac{1}{2^s}\C_{s/2} = \sqrt{\frac{8}{\pi s^3}}(1 + o(1)),\quad s \to +\infty.
\end{equation}

Using the so-called method of moments, Wigner demonstrated \cite{wigner1955characteristic} that the empirical spectral measures of matrices from $\mathcal{W}_N$ converge in expectation to the semicircular law (\ref{eq:W_pdf_def}). In a follow up article he improved this result to convergence in probability \cite{wigner1958distribution}. Almost sure weak convergence \cite{arnold1967asymptotic} and other asymptotic results were obtained later \cite{anderson2010introduction}.

Here we focus on a series of results obtained by Soshnikov and Sinai \cite{sinai1998central, sinai1998refinement, soshnikov1999universality}. These papers developed a combinatorial technique enabling exact quantification of the high-order expected moments of Wigner matrices, and led to the proof of universality of the joint distribution of their largest eigenvalues.

\begin{lemma}[Corollary of Main Theorem from \cite{sinai1998central}]
\label{lem:mom_conv_norm_or}
Let $\W_N \in \mathcal{W}_N$ and $s_N = o\(N^{2/3}\)$, then
\begin{equation}
\mathbb{E}\[\Tr{\W_N^{s_N}}\] = \begin{cases} \sqrt{\frac{8}{\pi s_N^3}}N(1 + o(1)), & s_N \text{ even}, \\ 0, & s_N \text{ odd},\end{cases}
\end{equation}
as $N \to +\infty$, and the random variables
\begin{equation}
\Tr{\W_N^{s_N}} - \mathbb{E} \[\Tr{\W_N^{s_N}}\]
\end{equation}
converge in distribution to the normal law $\mathcal{N}\(0,\frac{1}{\pi}\)$.
\end{lemma}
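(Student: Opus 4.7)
The plan is to follow the combinatorial moment method developed by Sinai and Soshnikov, which controls the high moments of Wigner matrices through a careful enumeration of closed walks on the complete graph. First, I would expand
\begin{equation*}
\Tr{\W_N^{s_N}} = \frac{1}{(2\sqrt{N})^{s_N}}\sum_{i_0,\dots,i_{s_N-1}=1}^{N} \xi_{i_0 i_1}\xi_{i_1 i_2}\cdots\xi_{i_{s_N-1} i_0},
\end{equation*}
where $\xi_{ij}\in\{\pm 1\}$ denotes the Rademacher sign of the $(i,j)$ entry, and identify each index tuple with a closed walk of length $s_N$ on the complete graph $K_N$ with loops allowed.

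Taking expectations, only walks in which every edge is traversed an even number of times survive. This immediately yields the odd-moment vanishing: when $s_N$ is odd, no multigraph with uniformly even edge multiplicities can have total multiplicity $s_N$, so $\mathbb{E}[\Tr{\W_N^{s_N}}]=0$. For even $s_N$, I would group the remaining walks by the isomorphism class of their multigraph support. The dominant class consists of the \emph{simple walks}, whose support is a plane tree with exactly $s_N/2$ edges, each edge being traversed exactly twice. A bijection with Dyck paths gives $\C_{s_N/2}$ as the count of plane tree shapes; the number of labelings of the $s_N/2+1$ distinct vertices by indices in $\{1,\dots,N\}$ is $N(N-1)\cdots(N-s_N/2)$. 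Multiplying by $\mathbb{E}[\xi^2]^{s_N/2}=1$ and dividing by $(2\sqrt{N})^{s_N}=2^{s_N}N^{s_N/2}$ reduces the leading contribution to $\frac{N}{2^{s_N}}\C_{s_N/2}\,(1+o(1))$, and the Stirling estimate recorded for $\C_{s_N/2}/2^{s_N}$ in the excerpt delivers the claimed $\sqrt{8/(\pi s_N^3)}\,N\,(1+o(1))$.

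The main obstacle is proving that all non-simple topologies are negligible precisely up to $s_N=o(N^{2/3})$. Walks whose support graph carries $k$ excess edges relative to the tree case pay a factor of order $N^{-k}$ in the vertex count but can gain at most $s_N^{O(k)}$ in combinatorial enumeration; balancing these two contributions through the Sinai--Soshnikov walk-counting lemma is exactly what forces the sharp threshold $s_N^3/N^2\to 0$. Once this estimate is in place, I would extend the same methodology to the second and higher cumulants of $\Tr{\W_N^{s_N}}$: paired walks on two independent copies of the ensemble control the variance, while $k$-tuples of walks govern the $k$-th cumulant. A Wick-style analysis identifies the leading pairings with non-crossing chord diagrams on the two walks, giving $\var{\Tr{\W_N^{s_N}}}\to 1/\pi$ while all cumulants of order $\geqslant 3$ vanish. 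The method of moments then yields convergence in distribution to $\mathcal{N}(0,1/\pi)$.
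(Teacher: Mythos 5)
The paper does not reprove this lemma; it simply cites it as a corollary of the main theorem of Sinai and Soshnikov, so the relevant comparison is whether your sketch faithfully reflects the mechanism of that cited proof. Your framework (moment method, closed-walk enumeration, Dyck-path bijection for the tree class, cumulant analysis for the CLT) is the right one and is indeed the approach taken in that reference. However, there is a genuine gap in the step where you obtain the leading term.

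You write that the tree (``simple walk'') class contributes $C_{s_N/2}\cdot N(N-1)\cdots(N-s_N/2)/(2^{s_N}N^{s_N/2}) = \frac{N}{2^{s_N}}C_{s_N/2}(1+o(1))$, implicitly asserting $N(N-1)\cdots(N-s_N/2)/N^{s_N/2}=N(1+o(1))$. That identity requires $s_N=o(\sqrt N)$. For $s_N=o(N^{2/3})$ but $s_N\gg\sqrt N$ one has
\begin{equation*}
\frac{N(N-1)\cdots(N-s_N/2)}{N^{s_N/2}} = N\,\exp\!\left(-\frac{s_N^2}{8N}(1+o(1))\right),
\end{equation*}
which is exponentially smaller than $N$. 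Consequently, the pure tree class alone does \emph{not} produce $\sqrt{8/(\pi s_N^3)}\,N(1+o(1))$ in the full range of the lemma; it undershoots by the factor $e^{-s_N^2/(8N)}$. Your subsequent assertion that ``all non-simple topologies are negligible'' is therefore not just imprecise but wrong in the crucial regime $\sqrt N\ll s_N\ll N^{2/3}$: walks with self-intersections contribute at the same order and their cumulative effect supplies exactly the compensating factor $e^{+s_N^2/(8N)}(1+o(1))$. The real content of the Sinai--Soshnikov analysis is the precise bookkeeping of this near-cancellation between the super-exponential decay of the distinct-vertex count and the combinatorial gain from self-intersecting walks, with the uncancelled residue controlled by $s_N^3/N^2\to 0$. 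Your sketch collapses this into a one-sided ``non-tree walks are small'' bound, which is the argument one uses to prove the much weaker semicircle law for fixed $s$, not the sharp trace asymptotic up to $o(N^{2/3})$. The threshold $s_N^3/N^2\to 0$ in your proposal is stated as if it arises from a routine $N^{-k}$-versus-$s^{O(k)}$ trade-off, whereas it actually governs the error after the two exponentials cancel. A minor additional slip: the variance is computed from pairs of closed walks on the \emph{same} matrix that share at least one edge (so that the product of expectations is subtracted off), not on ``two independent copies of the ensemble.''
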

This result in particular implies almost sure weak convergence of the empirical spectra of matrices from Wigner's ensemble to the semicircular law \cite{anderson2010introduction}. Below we also use the following variation of a result proven in \cite{soshnikov1999universality}.

\begin{lemma}[Corollary of Theorem 2 from \cite{soshnikov1999universality}]
Let $\W_N \in \mathcal{W}_{N}$, then for any sequence $s_N = O\(N^{2/3}\)$,
\begin{equation}
\mathbb{E}\[\Tr{\W_N^{s_N}}\] \leqslant c_W(s_N)N,
\end{equation}
where $c_W(s_N)$ is bounded uniformly over $N$.
\end{lemma}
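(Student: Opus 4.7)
The plan is to adapt the combinatorial moment-method expansion already used for Lemma \ref{lem:mom_conv_norm_or} and show that it continues to deliver a bound of size $O(N)$ when $s_N$ saturates the scale $N^{2/3}$. Writing each entry of $\W_N$ as $\frac{1}{2\sqrt{N}}\varepsilon_{ij}$ with $\varepsilon_{ij}$ i.i.d.\ Rademacher for $i\leqslant j$, I would begin from
\begin{equation}
\mathbb{E}\[\Tr{\W_N^{s_N}}\] = \frac{1}{(2\sqrt{N})^{s_N}}\sum_{\mathcal{P}}\mathbb{E}\[\varepsilon_{\mathcal{P}}\],
\end{equation}
where $\mathcal{P}=(i_1,i_2,\ldots,i_{s_N},i_1)$ ranges over closed walks of length $s_N$ on the complete graph with self-loops on $\{1,\ldots,N\}$, and $\varepsilon_{\mathcal{P}} = \prod_t \varepsilon_{i_t i_{t+1}}$. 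Independence forces every edge of $\mathcal{P}$ to be traversed an even number of times for a nonzero contribution, in which case the expectation equals one.

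Next I would classify the surviving walks, as in Sinai--Soshnikov, by the pair $(p,q)$ consisting of the number of distinct vertices and the number of distinct edges, and by the excess $k = q-(p-1) \geqslant 0$ of the support graph over a tree. Walks with $k=0$ in which each edge is traversed exactly twice are the Catalan walks; their count is bounded by $\C_{s_N/2}\cdot N(N-1)\cdots(N-s_N/2)$, so they contribute at most $\C_{s_N/2}/2^{s_N}\cdot N$, which by the Stirling estimate stated in Section \ref{sec:pwe} is already uniformly bounded in $N$ by a function of $s_N$ alone.

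The main obstacle is to show that the walks with $k\geqslant 1$, or with some edge of multiplicity $\geqslant 4$, together still contribute at most a constant-times-$N$ when $s_N$ is of the exact order $N^{2/3}$. A naive Sinai--Soshnikov bound spends a factor of order $s_N^2/N$ per unit of excess, which is $O(N^{1/3})$ in our regime and therefore does \emph{not} decay, so direct summation fails. The key step would be to invoke the refined enumeration of Soshnikov~\cite{soshnikov1999universality}, which, via a careful injection of walks with self-intersections into augmented tree walks, improves the cost to order $s_N^3/N$ per unit of excess and $s_N^2/N$ per high-multiplicity edge. Summing the resulting geometric-type series over all excess levels $k$ and all multiplicity patterns produces an upper bound of the form
\begin{equation}
\mathbb{E}\[\Tr{\W_N^{s_N}}\] \leqslant \frac{\C_{s_N/2}}{2^{s_N}}\cdot \Phi\!\(\frac{s_N^3}{N}\)\cdot N
\end{equation}
for some nondecreasing $\Phi$ independent of $N$. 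The hypothesis $s_N = O(N^{2/3})$ keeps $s_N^3/N$ bounded, so $\Phi(s_N^3/N)$ evaluates to a finite quantity depending only on $s_N$. Taking $c_W(s_N)$ to be the right-hand coefficient $\frac{\C_{s_N/2}}{2^{s_N}}\Phi(s_N^3/N)$ delivers the stated inequality with $c_W(s_N)$ uniformly bounded over $N$, as required.
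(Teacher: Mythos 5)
The paper offers no proof of this lemma; it is stated as a direct corollary of Theorem~2 of Soshnikov~\cite{soshnikov1999universality}, so there is no in-paper argument to compare against. Your high-level strategy --- expand the expected trace over closed even walks, split off the Catalan (tree) walks whose contribution is $\tfrac{\C_{s_N/2}}{2^{s_N}}N \approx \sqrt{8/(\pi s_N^3)}\,N$, and then control the self-intersecting walks via the Sinai--Soshnikov injection argument --- is indeed the correct skeleton of Soshnikov's proof, and your treatment of the Catalan contribution is right.

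However, the final step contains a genuine exponent error that breaks the argument. The dimensionless quantity that Soshnikov's refined enumeration produces per unit of excess is of order $s_N^3/N^2$, not $s_N^3/N$. Observe that your own text is internally inconsistent: you state the naive cost is $s_N^2/N\sim N^{1/3}$ and ``does not decay,'' and then propose $s_N^3/N$ as the \emph{improvement} --- but $s_N^3/N$ is strictly larger, so it cannot be an improvement, and moreover with $s_N = \Theta(N^{2/3})$ you get $s_N^3/N = \Theta(N)$, which is \emph{not} bounded, contradicting your concluding claim. With the correct ratio $s_N^3/N^2$, the hypothesis $s_N = O(N^{2/3})$ gives $s_N^3/N^2 = O(1)$, the geometric-type sum over excess levels converges, and the bound
\begin{equation*}
\mathbb{E}\[\Tr{\W_N^{s_N}}\] \leqslant \frac{\C_{s_N/2}}{2^{s_N}}\,\Phi\!\(\frac{s_N^3}{N^2}\)\,N
\end{equation*}
does yield a coefficient $c_W(s_N)$ bounded uniformly in $N$, as required. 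So the argument is repairable, but as written the crucial step asserts boundedness of a quantity that in fact diverges.
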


\subsection{Sample Covariance Matrices and Marchenko-Pastur Law}
Let $\mathcal{X}_{N,p}$ be the set $M_{N,p}$ of $N \times p$ matrices with $\mypm \frac{1}{\sqrt{N}}$ entries endowed with the uniform probability measure. Below we consider a setting where the dimensions $N$ and $p_N = p(N)$ grow such that the limit
\begin{equation}
\gamma = \lim_{N \to \infty} \frac{p_N}{N},
\end{equation}
exists. The spectra of the SCMs $\X_N^\top\X_N$ with $\X_N \in \M_{N,p}$ are invariant under the replacement of $\X_N$ with $\X_N^\top$ up to zero eigenvalues, therefore, without loss of generality we assume $\gamma \leqslant 1$. The Marchenko-Pastur distribution is defined through its p.d.f. as
\begin{equation}
\label{eq:MP_pdf_def}
f_{MP}(x) = \begin{cases} \frac{1}{2\pi \gamma x}\sqrt{(b-x)(x-a)}, & a \leqslant x \leqslant b, \\ \qquad\qquad\quad 0, & \text{otherwise}, \end{cases}
\end{equation}
where
\begin{equation}
a = (1-\sqrt{\gamma})^2,\qquad b = (1+\sqrt{\gamma})^2.
\end{equation}
The moments of this distribution read as
\begin{equation}
\label{eq:dc_mom}
\mu_{W}(s) = \int_{-\infty}^{+\infty} x^s f_{W} dx = \sum_{k=1}^{s} \gamma^k \bm{N}(s_N,k),
\end{equation}
where
\begin{equation}
\bm{N}(s,k) = \frac{1}{s}{k \choose s}{k-1 \choose s}
\end{equation}
are Narayana numbers. Stirling's approximation gives \cite{soshnikov2002note}
\begin{equation*}
\sum_{k=1}^{s} \gamma^k \bm{N}(s_N,k) = \frac{\gamma^{1/4}}{2\sqrt{\pi}}\frac{N(1+\sqrt{\gamma})^{2s+1}}{s^{3/2}}(1 + o(1)).
\end{equation*}

Marchenko and Pastur proved in \cite{marchenko1967distribution} that the spectrum of the product $\X_N^\top\X_N$ converges almost surely weakly to the limiting distribution (\ref{eq:MP_pdf_def}). Later this result was strengthened in \cite{johnstone2001distribution} and other works.

P{\'e}ch{\'e} proved \cite{peche2009universality} the universality of the joint distribution of top eigenvalues of SCM for a rich family of marginal distributions by developing a tight bound on the expected high-order moments. Adapted to our setup their main technical result reads as follows.
\begin{lemma}[Corollary from Propositions 2.4 and 2.5 from \cite{peche2009universality}]
\label{lem:mom_conv_norm_or}
Let $\X_N \in \mathcal{X}_{N,p_N}$ and $s_N = o\(\sqrt{N}\)$, then
\begin{multline}
\mathbb{E} \[\Tr{\(\frac{\X_N^\top\X_N}{(1+\sqrt{\gamma})^2}\)^{s_N}}\] \\ = \frac{\gamma^{1/4}(1+\sqrt{\gamma})}{2\sqrt{\pi}}\frac{N}{s_N^{3/2}}(1 + o(1)),
\end{multline}
as $N \to +\infty$, and the random variables
\begin{equation*}
\Tr{\(\frac{\X_N^\top\X_N}{(1+\sqrt{\gamma})^2}\)^{s_N}} - \mathbb{E} \[\Tr{\(\frac{\X_N^\top\X_N}{(1+\sqrt{\gamma})^2}\)^{s_N}}\]
\end{equation*}
converge in distribution to the normal law $\mathcal{N}\(0,\frac{1}{\pi}\)$.
\end{lemma}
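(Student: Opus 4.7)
The plan is to obtain both assertions as immediate specializations of Péché's Propositions~2.4 and 2.5 in \cite{peche2009universality} once I verify that the $\mypm \tfrac{1}{\sqrt{N}}$ ensemble fits their hypothesis set and that my normalization matches theirs. Péché's results hold for rectangular matrices with i.i.d.\ centered entries of variance $1/N$ whose moments grow at a controlled sub-Gaussian rate; Rademacher entries rescaled by $\tfrac{1}{\sqrt{N}}$ are symmetric, mean zero, variance $1/N$ and uniformly bounded, so every higher moment estimate used in \cite{peche2009universality} is satisfied trivially. Hence the combinatorial moment machinery developed there applies verbatim to $\X_N\in\mathcal{X}_{N,p_N}$.

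For the first claim, I would expand $\Tr{(\X_N^\top\X_N)^{s_N}}$ as a sum over closed bipartite walks of length $2s_N$ on the bipartite graph with vertex classes of sizes $N$ and $p_N$. After taking expectations, only walks in which every edge is traversed an even number of times survive. The dominant contribution comes from walks where each edge is used exactly twice and the underlying graph is a tree; these are in bijection with pairs (non-crossing pair partition, vertex labeling) and their count is governed by the Narayana numbers $\bm{N}(s_N,k)$, weighted by $\gamma^k$ from the $p_N\approx\gamma N$ choices of the $k$ column-vertices. Using the Stirling estimate
\begin{equation*}
\sum_{k=1}^{s_N}\gamma^k \bm{N}(s_N,k) = \frac{\gamma^{1/4}}{2\sqrt{\pi}}\frac{(1+\sqrt{\gamma})^{2s_N+1}}{s_N^{3/2}}(1+o(1))
\end{equation*}
and multiplying by $N$ (for the choices of the root vertex) then dividing by $(1+\sqrt{\gamma})^{2s_N}$ from the normalization yields the claimed leading constant $\frac{\gamma^{1/4}(1+\sqrt{\gamma})}{2\sqrt{\pi}}\cdot\frac{N}{s_N^{3/2}}$. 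The condition $s_N=o(\sqrt{N})$ is precisely what is needed to ensure that walks with at least one edge traversed four times or with a closed cycle contribute only lower-order corrections, as shown in Proposition~2.4 of \cite{peche2009universality}.

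For the Gaussian fluctuation statement I would follow Proposition~2.5 of \cite{peche2009universality}: compute the second moment of the centered trace by a doubled walk argument. Pairs of walks joined along matching edges contribute a bipartite-walk analog of Sinai--Soshnikov's calculation and yield asymptotic variance $1/\pi$, while higher cumulants vanish in the limit because the number of admissible gluings is subdominant whenever $s_N=o(\sqrt{N})$. A standard method-of-moments CLT then converts these cumulant estimates into convergence in distribution to $\mathcal{N}(0,1/\pi)$.

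The only non-mechanical step is the bookkeeping that translates Péché's normalization (typically entries of variance $1/N$ with a generic sub-Gaussian distribution and SCM normalized so the right edge sits at $b=(1+\sqrt{\gamma})^2$) to our Rademacher scaling and the rescaling by $(1+\sqrt{\gamma})^{2s_N}$ in the trace. The main potential obstacle is checking that the error terms in \cite{peche2009universality} that depend on the tail of the entry distribution stay uniformly controlled up to $s_N=o(\sqrt{N})$ in the bounded case; this is in fact easier than the general sub-Gaussian case, since every bounded-entry moment is at worst $O(N^{-k/2})$, so no additional work is required beyond citing the stated propositions.
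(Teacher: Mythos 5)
Your proposal follows the same route as the paper, which presents this lemma purely as an adaptation of P\'ech\'e's Propositions 2.4 and 2.5 to the Rademacher case and gives no independent proof. Your verification that $\mypm\tfrac{1}{\sqrt{N}}$ entries satisfy the sub-Gaussian moment hypotheses, together with the normalization bookkeeping that converts the Narayana-number asymptotics into the stated constant $\tfrac{\gamma^{1/4}(1+\sqrt\gamma)}{2\sqrt\pi}\cdot\tfrac{N}{s_N^{3/2}}$, is exactly the specialization the authors have in mind.
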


Below we utilize the following result from \cite{peche2009universality}.

\begin{lemma}[Corollary of Theorem 3.1 from \cite{peche2009universality}]
Let $\X_N \in \mathcal{X}_{N,p_N}$, then for any sequence $s_N = O\(N^{2/3}\)$,
\begin{equation*}
\mathbb{E}\[\Tr{\(\frac{\X_N^\top\X_N}{(1+\sqrt{\gamma})^2}\)^{s_N}}\] \leqslant c_{MP}(\gamma,s_N)N,
\end{equation*}
where $c_{MP}(\gamma,s_N)$ is bounded uniformly over $N$.
\end{lemma}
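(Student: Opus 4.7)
The plan is to deduce this bound directly from Theorem 3.1 of \cite{peche2009universality}, which furnishes a uniform high-moment estimate for sample covariance matrices whose entries satisfy mild tail conditions. The argument is essentially a specialization of that theorem to Rademacher $\mypm 1/\sqrt{N}$ entries; the probabilistic content is already in place, so what remains is largely a careful translation of the normalization.

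First I would verify that the $\mypm 1/\sqrt{N}$ distribution meets the hypotheses of P{\'e}ch{\'e}'s theorem. Since $\sqrt{N}|X_{ij}|=1$ deterministically, all moments of the rescaled entries equal $1$ and the subgaussian-type conditions in \cite{peche2009universality} hold with the smallest possible constants, placing our ensemble inside the class to which her theorem applies. Then I would invoke her moment expansion of the trace as a sum over closed bipartite walks
\[
\Tr{\(\X_N^\top\X_N\)^{s_N}} = \sum X_{j_1 i_1}X_{j_1 i_2}X_{j_2 i_2}X_{j_2 i_3}\cdots X_{j_{s_N} i_{s_N}}X_{j_{s_N} i_1};
\]
after taking expectation, only walks in which each edge is traversed an even number of times survive. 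P{\'e}ch{\'e} classifies such walks by the number of distinct row/column indices and by the genus of the associated pair partition: the non-planar contributions are subleading, and the leading planar count is a weighted sum of Narayana numbers.

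Dividing both sides by $(1+\sqrt{\gamma})^{2s_N}$ yields the desired inequality, with $c_{MP}(\gamma,s_N)$ identified as an explicit combinatorial weight independent of $N$. The key estimate in \cite{peche2009universality}, which is the SCM analog of Soshnikov's Wigner bound in \cite{soshnikov1999universality}, shows that this weight decays at most like $s_N^{-3/2}$ times a bounded prefactor throughout the regime $s_N = O(N^{2/3})$, precisely the threshold at which Tracy--Widom edge statistics remain universal. The main obstacle is notational rather than mathematical: one must carefully track the edge factor $(1+\sqrt{\gamma})^{2s_N}$ through P{\'e}ch{\'e}'s normalization conventions, and verify that the contributions of walks with unmatched edges or higher genus (which are the only places where the Rademacher atomicity could in principle fail to help) remain $O(N)$ uniformly in $s_N$ in this range, so that after the rescaling the bound takes exactly the form asserted in the lemma.
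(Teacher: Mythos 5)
Your proposal is correct and follows essentially the same route as the paper: the paper states this result as a direct corollary of P\'ech\'e's Theorem 3.1 and offers no further proof, which is exactly the specialization (check Rademacher entries satisfy the hypotheses, invoke the bipartite-walk moment bound, rescale by $(1+\sqrt{\gamma})^{2s_N}$) that you carry out. Note only that the lemma asks merely for $c_{MP}(\gamma,s_N)$ to be bounded uniformly in $N$, so the $s_N^{-3/2}$ decay you track is stronger than what is strictly required.
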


\section{Pseudo-Random Ensembles}
\label{sec:pseudo-rand}
\subsection{Definitions}
In this section, we recall some definitions from \cite{soloveychik2017pseudo} and introduce a family of pseudo-$\,$Marchenko-Pastur (pseudo-MP) ensembles analogous to the pseudo-Wigner matrices.

\begin{definition}[\cite{soloveychik2017pseudo}]
\label{def:ps_wig}
Let $\x = \{X_i\}_{i=1}^N$ be a sequence of sign-valued random variables. $\x$ is $r$-independent if any $r$ of its elements $X_{i_1},\dots,X_{i_r}$ are statistically independent,
\begin{equation}
\mathbb{P}\[X_{i_1}=b_1,\dots,X_{i_r}=b_r\] = \prod_{l=1}^r \mathbb{P}\[X_{i_l}=b_l\],
\end{equation}
for any $i_1\neq\dots\neq i_r$ in the range $[1,N]$ and $b_i \in \{\mypm 1\}$.
\end{definition}

\begin{definition}[\cite{soloveychik2017pseudo}]
\label{def:ps_wig}
Let a subset $\mathcal{A}_N^r \subset S_N$ be endowed with the uniform measure. We say that it is an $r$-independent pseudo-Wigner ensemble of order $N$ if the elements of the upper triangular (including the main diagonal) parts of its matrices form an $r$-independent sequence w.r.t. (with respect to) the measure induced on them by $\mathcal{A}_N^r$.
\end{definition}

\begin{definition}[$r$-independent Pseudo-MP Ensemble of order $N$]
\label{def:ps_wish}
Let a subset $\mathcal{Y}_{N,p}^r \subset M_{N,p}$ be endowed with the uniform measure. We say that the ensemble of matrices
\begin{equation}
\{\Y_N\Y_N^\top \mid \Y_N \in \mathcal{Y}_{N,p}^r\}
\end{equation}
is an \textbf{$r$-independent pseudo-MP ensemble of order $N$} if the elements of the matrices $\Y_N$ form an $r$-independent sequence w.r.t. the measure induced on them by $\mathcal{Y}_{N,p}^r$.
\end{definition}

Below, whenever probability measure over $\mathcal{A}_N^r$ or $\mathcal{Y}_{N,p}^r$ are considered, they are always assumed to be uniform as in Definitions \ref{def:ps_wig} and \ref{def:ps_wish}.

The last definition is justified by the following result.
\begin{prop}
\label{th:main_res}
Let $q < e$, then for $r \leqslant q\, \log_2 N$ and any $\alpha \in (\frac{q}{e},1)$ there exists $N_0$ such that for any $N \geqslant N_0$, with probability at least $1-\frac{r}{N^{2(1-\alpha)}}$ a matrix $\Y_N$ chosen uniformly from $M_{N,p_N}^{2r}$ satisfies
\begin{equation}
\label{eq:main_bound}
\left| F_{\Y_N^\top\Y_N}(x) - F_{MP}(x)\right| \leqslant \frac{1}{r},\quad \forall x \in \mathbb{R}.
\end{equation}
\end{prop}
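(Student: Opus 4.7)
The approach I would take is the method of moments, leveraging the $2r$-independence of the entries of $\Y_N$. The key observation is that $\Tr{\(\Y_N^\top\Y_N\)^s}$ is a homogeneous polynomial of degree $2s$ in the $\mypm 1/\sqrt{N}$ entries of $\Y_N$, so for every $s \leqslant r$ the $2r$-independence of the measure on $\mathcal{Y}_{N,p_N}^{2r}$ forces
\begin{equation*}
\mathbb{E}\left[\Tr{\(\Y_N^\top\Y_N\)^s}\right] = \mathbb{E}\left[\Tr{\(\X_N^\top\X_N\)^s}\right],
\end{equation*}
where the right-hand side is taken over the truly random Bernoulli ensemble $\mathcal{X}_{N,p_N}$. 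Using the last lemma (Corollary of Theorem 3.1 from \cite{peche2009universality}) one then has a uniform bound $c_{MP}(\gamma,s)\,N$ on this expectation in the regime of $s$ we need, and by the preceding Corollary of Propositions 2.4 and 2.5 its leading term is the MP moment $N\mu_{MP}(s)$.

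Next I would apply Markov's inequality to $\Tr{\(\Y_N^\top\Y_N\)^s}$: since this trace is non-negative,
\begin{equation*}
\Pr{\Tr{\(\Y_N^\top\Y_N\)^s} > t} \leqslant \frac{\mathbb{E}\left[\Tr{\(\Y_N^\top\Y_N\)^s}\right]}{t}.
\end{equation*}
Taking $s$ of order $r$ and $t$ a small multiplicative factor above $N\mu_{MP}(s)$ yields an upper bound placing $\lambda_{\max}\(\Y_N^\top\Y_N\)$ inside a small neighborhood of the MP support with failure probability at most $N^{-2(1-\alpha)}$. The constraint $r \leqslant q\log_2 N$ with $q<e$ enters precisely here: it balances the exponential growth $\(1+\sqrt{\gamma}\)^{2s}$ of the MP moments against the factor $N^{2\alpha}$ in the denominator of the Markov threshold, and a union bound over $s=1,\ldots,r$ delivers the advertised $1 - r/N^{2(1-\alpha)}$ probability.

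Finally, I would convert the matched empirical moments together with the spectral-norm control into the Kolmogorov-distance bound $\sup_x |F_{\Y_N^\top\Y_N}(x) - F_{MP}(x)| \leqslant 1/r$. For this I would use the standard polynomial-approximation trick for compactly supported distributions: the indicator $\chi_{(-\infty,x]}$ can be approximated uniformly on the interval $[0,\(1+\sqrt{\gamma}\)^2]$ by a polynomial of degree $r$ with error $O(1/r)$ (via Chebyshev or Jackson-type kernels), and the matched moments then propagate this approximation from $F_{MP}$ to the empirical c.d.f.\ of $\Y_N^\top\Y_N$.

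The main obstacle is this last step, because the polynomial approximation of $\chi_{(-\infty,x]}$ grows rapidly outside the MP support, so without the spectral-norm bound from the second step a handful of stray eigenvalues could spoil the moment comparison. Combining the Markov-based norm bound with the polynomial approximation error and tracking how the approximation quality scales in $r$ is what pins down the specific regime $q<e$ and the exponent $N^{2(1-\alpha)}$ appearing in the proposition.
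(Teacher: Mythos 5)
The paper's own proof of Proposition~\ref{th:main_res} is a one-line pointer to the proof of the analogous pseudo-Wigner statement in \cite{soloveychik2017pseudo}, so what you are really being asked to reproduce is the argument from that companion paper. Your proposal has the right ingredients --- moment matching via $2r$-independence, a Markov bound on the trace, and a Jackson/Chebyshev polynomial approximation of the step function to convert moment proximity into a Kolmogorov-distance bound --- and the first of these observations is exactly right: $\Tr{(\Y_N^\top\Y_N)^s}$ involves monomials in at most $2s$ distinct entries, so $2r$-independence forces $\mathbb{E}\big[\Tr{(\Y_N^\top\Y_N)^s}\big]=\mathbb{E}\big[\Tr{(\X_N^\top\X_N)^s}\big]$ for $s\leqslant r$.

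The gap is in how you propagate this to the random empirical c.d.f. You write that ``the matched moments then propagate this approximation from $F_{MP}$ to the empirical c.d.f.\ of $\Y_N^\top\Y_N$,'' but only the \emph{expected} empirical moments are matched; the quantities $\frac{1}{N}\Tr{(\Y_N^\top\Y_N)^k}$ themselves are random and fluctuate around those expectations. The Kolmogorov bound $\sup_x|F_{\Y_N^\top\Y_N}(x)-F_{MP}(x)|\leqslant 1/r$ is an event, so you need high-probability \emph{two-sided} control of each empirical moment, not just of its mean. Your Markov step supplies only a one-sided bound on $\Tr{(\Y_N^\top\Y_N)^s}$ (hence only an upper bound on the top eigenvalue); it does not give concentration of $\frac{1}{N}\Tr{(\Y_N^\top\Y_N)^k}$ around $\mu_{MP}(k)$, which is the input the polynomial-approximation step actually consumes. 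What is missing is a Chebyshev-type (second-moment) estimate on each $\Tr{(\Y_N^\top\Y_N)^k}$ --- note that controlling $\mathrm{Var}\big(\Tr{(\Y_N^\top\Y_N)^k}\big)$ requires matching monomials of degree up to $4k$, which is precisely why the proposition draws $\Y_N$ from the $2r$-independent ensemble while the final bound is stated at scale $1/r$ --- followed by a union bound over the $r$ moment indices, which is what produces the $r/N^{2(1-\alpha)}$ failure probability. As written, your union bound is applied to the one-sided norm events, which are nested in $s$ and give nothing new from the union; the correct union is over the two-sided moment-concentration events. With that repair, the rest of your outline (Jackson-kernel approximation on the compact MP support, leakage control via the norm bound) is the right shape.
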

\begin{proof}
The proof from \cite{soloveychik2017pseudo} applies with minor changes.
\end{proof}

\subsection{High-Order Moments}
\begin{lemma}
\label{lem:mom_conv_norm_wig}
Let $\{\beta_N\},\; \{r_N\},\; \{s_N\} \subset \N$ be such that $s_N = o\(N^{2/3}\)$ with $s_N \leqslant \beta_N r_N$, and $\A_N$ be chosen uniformly from $\mathcal{A}_N^{\beta_N r_N}$, then for the expected moments we have
\begin{equation}
\mathbb{E} \[\Tr{\A_N^{s_N}}\] = \begin{cases} \sqrt{\frac{8}{\pi s_N^3}}N(1 + o(1)), & s_N \text{ even}, \\ 0, & s_N \text{ odd},\end{cases}
\end{equation}
as $N \rightarrow \infty$. In addition, the first $p=1,\dots,2\beta_N$ moments of the random variable
\begin{equation}
\Tr{\A_N^{s_N}} - \mathbb{E} \[\Tr{\A_N^{s_N}}\]
\end{equation}
converge to the moments of the normal law $\mathcal{N}\(0,\frac{1}{\pi}\)$.
\end{lemma}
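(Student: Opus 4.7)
The plan is to reduce both claims to the truly random Wigner case (Lemma 1) via the $r$-independence property. The guiding observation is that $\Tr{\A_N^{s_N}}$ expands as a polynomial of degree $s_N$ in the upper-triangular entries of $\A_N$, so each of its monomials involves at most $s_N \leqslant \beta_N r_N$ distinct entries. By Definition 2, the joint law of any such collection of entries under the uniform measure on $\mathcal{A}_N^{\beta_N r_N}$ coincides with the i.i.d.\ Rademacher law of $\mathcal{W}_N$. Summing the matching term-by-term expectations gives $\mathbb{E}\bigl[\Tr{\A_N^{s_N}}\bigr] = \mathbb{E}\bigl[\Tr{\W_N^{s_N}}\bigr]$, and the first assertion follows directly from Lemma 1.

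For the centered moments, I would expand $\bigl(\Tr{\A_N^{s_N}} - \mathbb{E}\bigl[\Tr{\A_N^{s_N}}\bigr]\bigr)^p$ as a polynomial of degree $p s_N$ in the entries of $\A_N$, the subtraction of the mean acting only as a constant shift. A generic monomial involves up to $p s_N$ distinct entries, but by the $\pm 1$ symmetry of each marginal, only monomials in which every distinct entry appears with even multiplicity survive the expectation; this is true in $\mathcal{W}_N$ and, by the factorization implied by $r$-independence restricted to the monomial's support, also in $\mathcal{A}_N^{\beta_N r_N}$. The surviving monomials therefore carry at most $p s_N/2$ distinct entries, and for $p \leqslant 2\beta_N$ the resulting support sizes fit inside the $\beta_N r_N$-independence window prescribed by $s_N \leqslant \beta_N r_N$ (up to bookkeeping constants in the Sinai--Sosnikov combinatorial accounting). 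Consequently the pseudo-Wigner $p$-th centered moment coincides term-by-term with its $\mathcal{W}_N$ counterpart, and Lemma 1 delivers convergence to the $p$-th moment of $\mathcal{N}\bigl(0, \tfrac{1}{\pi}\bigr)$.

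The main obstacle will be the second step: rigorously tracking the Sinai--Sosnikov classification of $p$-tuples of closed walks of length $s_N$ and confirming that the dominant configurations (connected clusters of paired walks, the ones that actually produce the Gaussian moments of $\mathcal{N}(0,1/\pi)$) have distinct-edge count bounded by $\beta_N r_N$, while the atypical configurations with larger supports contribute only $o(1)$ to the $p$-th moment. Once this combinatorial catalog is imported from Lemma 1 and checked class by class against the independence window, the proof collapses onto a term-by-term identification with the truly random case and no further analytic input is required.
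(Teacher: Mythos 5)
Your reduction of the \emph{expected trace} to the Wigner case is clean and is exactly what the paper intends by "follows that of Main Theorem of \cite{sinai1998central}": each monomial in the expansion of $\Tr{\A_N^{s_N}}$ involves at most $s_N\leqslant\beta_N r_N$ distinct entries, so $\beta_N r_N$-independence lets you replace their joint law with the i.i.d.\ Rademacher law term by term, yielding $\mathbb{E}\[\Tr{\A_N^{s_N}}\]=\mathbb{E}\[\Tr{\W_N^{s_N}}\]$ and importing Lemma~1. That part is right.

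For the centered $p$-th moments the proposal has a real gap, and it is precisely the one you flag as the "main obstacle" without resolving. You argue that only even-multiplicity monomials survive "in $\mathcal{W}_N$ and, by the factorization implied by $r$-independence restricted to the monomial's support, also in $\mathcal{A}_N^{\beta_N r_N}$," and that these have at most $ps_N/2\leqslant\beta_N s_N$ distinct entries. But $r$-independence only controls the joint law of up to $\beta_N r_N$ entries; a monomial in $\bigl(\Tr{\A_N^{s_N}}\bigr)^p$ can a priori involve up to $p s_N$ distinct entries, and once its support exceeds $\beta_N r_N$ the pseudo-ensemble expectation is simply unconstrained. In particular, a monomial whose support is large and in which some entry appears with odd multiplicity vanishes in $\mathcal{W}_N$ (by centrosymmetry of the i.i.d.\ law), but in $\mathcal{A}_N^{\beta_N r_N}$ there is no single-coordinate sign-flip symmetry and $r$-independence gives you no factorization over that support, so its expectation need not vanish. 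You cannot dismiss these terms by appealing to the Sinai--Soshnikov classification alone, because that classification is a statement about which walk configurations carry nonzero \emph{Wigner} weight; the problematic configurations are ones that are weightless under $\mathcal{W}_N$ but potentially not under $\mathcal{A}_N^{\beta_N r_N}$. A correct argument must control them directly, e.g.\ by a counting bound showing that walk $p$-tuples whose edge-support exceeds $\beta_N r_N$ are combinatorially rare enough that even a crude bound on their expectations contributes $o(1)$.

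There is also an arithmetic mismatch in the claimed fit into the independence window: with $p\leqslant 2\beta_N$, even-multiplicity configurations have up to $ps_N/2\leqslant\beta_N s_N$ distinct entries, and $\beta_N s_N\leqslant\beta_N r_N$ only if $s_N\leqslant r_N$, which is stronger than the stated hypothesis $s_N\leqslant\beta_N r_N$. As written your argument needs the stronger inequality, and the parenthetical "up to bookkeeping constants" does not close this. (The paper gives no proof beyond the citation, so it is possible the intended hypothesis is indeed $s_N\leqslant r_N$; but you should not silently upgrade the hypothesis.)
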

\begin{proof}
The proof follows that of Main Theorem of \cite{sinai1998central}.
\end{proof}

\begin{lemma}
\label{lem:main_res_wig}
Let $\{r_N\},\; \{s_N\} \subset \N$ be such that $s_N = O\(N^{2/3}\)$ with $s_N \leqslant r_N$, and $\A_N$ be chosen uniformly from $\mathcal{A}_N^{r_N}$, then
\begin{equation}
\mathbb{E}\[\Tr{\A_N^{s_N}}\] \leqslant c_W(s_N)N,
\end{equation}
where $c_W(s_N)$ is bounded uniformly over $N$.
\end{lemma}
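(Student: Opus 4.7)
The plan is to reduce the claim to its truly random counterpart, the Corollary of Theorem 2 from \cite{soshnikov1999universality} stated earlier, by exploiting the fact that $r_N$-independence agrees with full independence on any sufficiently small collection of entries.

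First I would expand the trace combinatorially as a sum over closed walks of length $s_N$:
\begin{equation*}
\Tr{\A_N^{s_N}} = \sum_{i_1,\dots,i_{s_N}=1}^{N} A_{i_1 i_2} A_{i_2 i_3} \cdots A_{i_{s_N} i_1}.
\end{equation*}
Each summand is a product of $s_N$ entries of $\A_N$, indexed by the $s_N$ edges of a closed walk on $\{1,\dots,N\}$. Because $\A_N$ is symmetric, every such entry is equal to an upper-triangular entry $A_{ij}$ with $i\leqslant j$, and a walk of length $s_N$ touches at most $s_N$ distinct upper-triangular entries.

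Next I would invoke the $r_N$-independence hypothesis. By Definition~\ref{def:ps_wig}, the upper-triangular entries of $\A_N$ form an $r_N$-independent sequence of Rademacher variables (scaled by $1/(2\sqrt{N})$). Since $s_N \leqslant r_N$, any $s_N$ or fewer of these entries are jointly independent with Rademacher marginals, which is precisely the joint law of the corresponding entries of a truly random Wigner matrix $\W_N \in \mathcal{W}_N$. Consequently, for every closed walk,
\begin{equation*}
\mathbb{E}\[A_{i_1 i_2} A_{i_2 i_3} \cdots A_{i_{s_N} i_1}\] = \mathbb{E}\[W_{i_1 i_2} W_{i_2 i_3} \cdots W_{i_{s_N} i_1}\],
\end{equation*}
so summing over all walks yields $\mathbb{E}\[\Tr{\A_N^{s_N}}\] = \mathbb{E}\[\Tr{\W_N^{s_N}}\]$. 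Finally I would apply the Corollary of Theorem 2 from \cite{soshnikov1999universality} (the unlabeled second lemma in Section \ref{sec:pwe}) which gives $\mathbb{E}\[\Tr{\W_N^{s_N}}\] \leqslant c_W(s_N)N$ with $c_W(s_N)$ uniformly bounded in $N$, completing the proof.

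The only real obstacle is the elementary combinatorial bookkeeping: one must verify that a closed walk of length $s_N$ truly involves no more than $s_N$ distinct upper-triangular entries, so that the $r_N$-independence assumption (with $r_N \geqslant s_N$) applies term by term. Once this is settled, the proof is a clean reduction and requires no new analytic estimates beyond those already cited.
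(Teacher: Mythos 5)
Your proof is correct and captures precisely what the paper means by its one-line remark that the argument is "analogous to that of Theorem 2 from Soshnikov": since $s_N \leqslant r_N$, every monomial in the path expansion of the trace involves at most $s_N$ distinct upper-triangular entries, whose joint law under $r_N$-independence is the same product of Rademacher marginals as in the truly random Wigner case, so $\mathbb{E}\[\Tr{\A_N^{s_N}}\] = \mathbb{E}\[\Tr{\W_N^{s_N}}\]$ and the cited corollary of Soshnikov's Theorem 2 yields the bound. You simply phrase the reduction more explicitly than the paper does.
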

\begin{proof}
The proof is analogous to that of Theorem 2 from \cite{soshnikov1999universality}.
\end{proof}

\begin{lemma}
\label{lem:mom_conv_norm_wish}
Let $\{\beta_N\},\; \{r_N\},\; \{s_N\} \subset \N$ be such that $s_N = o\(\sqrt{N}\)$ with $s_N \leqslant \beta_N r_N$, and $\Y_N$ be chosen uniformly from $\mathcal{Y}_{N,p_N}^{\beta_N r_N}$, then
\begin{multline}
\mathbb{E} \[\Tr{\(\frac{\Y_N^\top\Y_N}{(1+\sqrt{\gamma})^2}\)^{s_N}}\] \\ = \frac{\gamma^{1/4}(1+\sqrt{\gamma})}{2\sqrt{\pi}}\frac{N}{s_N^{3/2}}(1 + o(1)),
\end{multline}
as $N \rightarrow \infty$. In addition, the first $p=1,\dots,2\beta_N$ moments of the random variable
\begin{equation*}
\Tr{\(\frac{\Y_N^\top\Y_N}{(1+\sqrt{\gamma})^2}\)^{s_N}} - \mathbb{E} \[\Tr{\(\frac{\Y_N^\top\Y_N}{(1+\sqrt{\gamma})^2}\)^{s_N}}\]
\end{equation*}
converge to the moments of the normal law $\mathcal{N}\(0,\frac{1}{\pi}\)$.
\end{lemma}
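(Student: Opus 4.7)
The plan is to transcribe the combinatorial moment computation of P\'ech\'e \cite{peche2009universality} (Propositions 2.4 and 2.5) almost verbatim, verifying at each step that only the joint distribution of at most $\beta_N r_N$ of the sign entries of $\Y_N$ is being invoked. First, I would expand
\begin{equation*}
\Tr{\(\Y_N^\top\Y_N\)^{s_N}} = \sum_{\mathcal{P}} \prod_{e \in \mathcal{P}} (\Y_N)_e,
\end{equation*}
where the sum runs over closed walks $\mathcal{P}$ of length $2s_N$ on the complete bipartite graph with vertex classes of sizes $N$ and $p_N$, alternating sides at each step, and the product is over the $2s_N$ edges of $\mathcal{P}$ counted with multiplicity. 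Because the entries of $\Y_N$ take the values $\mypm 1/\sqrt{N}$, the expectation of each summand depends only on the joint law of the distinct entries in the product and vanishes unless every edge is traversed an even number of times.

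Next, I would invoke P\'ech\'e's classification of the surviving walks in terms of their bipartite skeleton trees and Narayana-enumerated tree pairings. The leading contribution comes from walks whose edge multigraph is a bipartite tree with each edge used exactly twice, and any such walk involves at most $s_N$ distinct entries of $\Y_N$. Since $s_N \leqslant \beta_N r_N$ and $\Y_N$ is drawn from a $\beta_N r_N$-independent ensemble, the joint distribution of any such set of entries coincides with that of truly independent Rademachers, and hence every expectation in P\'ech\'e's enumeration carries over without change. Summing the dominant tree contributions and dividing by $(1+\sqrt{\gamma})^{2s_N}$ then reproduces the stated leading-order asymptotic $\frac{\gamma^{1/4}(1+\sqrt{\gamma})}{2\sqrt{\pi}}\frac{N}{s_N^{3/2}}(1+o(1))$.

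For the convergence of the first $p = 1,\dots,2\beta_N$ centered moments, I would expand the $p$-th centered moment as a sum over $p$-tuples of closed bipartite walks of length $2s_N$ each. Centering eliminates all terms whose $p$-tuple splits into edge-disjoint pieces, so only walk-tuples whose combined edge multigraph is connected survive; in P\'ech\'e's accounting, the dominant such contributions come from pairings of the $p$ walks by a small number of shared edges with the rest of the structure tree-like, and these pairings are precisely the Gaussian Wick pairings producing the moments of $\mathcal{N}\(0,\tfrac{1}{\pi}\)$. Each surviving $p$-tuple involves $O(p\, s_N)$ distinct entries, so for $p \leqslant 2\beta_N$ and $s_N \leqslant \beta_N r_N$ each product stays within the range of joint distributions fixed by $\beta_N r_N$-independence, and every expectation matches the truly random case.

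The main obstacle is the bookkeeping in this last step: one must verify throughout P\'ech\'e's graph decomposition, including the sub-leading defect contributions controlled in Proposition 2.5 of \cite{peche2009universality}, that the effective number of distinct entries of $\Y_N$ queried never exceeds $\beta_N r_N$, so that Rademacher expectations can always be replaced by their truly independent counterparts. This is the same verification that underlies the Wigner analogue Lemma \ref{lem:mom_conv_norm_wig}; everything else is a direct transcription of P\'ech\'e's argument.
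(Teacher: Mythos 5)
Your proposal matches the paper's proof, which is simply the one-line remark that the argument is ``analogous to those of Propositions 2.4 and 2.5 from P\'ech\'e.'' You have in fact supplied more detail than the paper does, correctly identifying the key verification -- that the combinatorial expansion only ever queries a bounded number of entries of $\Y_N$ at a time, so $\beta_N r_N$-independence lets every Rademacher expectation be replaced by its truly independent counterpart -- and honestly flagging the bookkeeping of that bound through P\'ech\'e's sub-leading terms as the remaining work.
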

\begin{proof}
The proof is analogous to those of Propositions 2.4 and 2.5 from \cite{peche2009universality}.
\end{proof}

\begin{lemma}
\label{th:main_res_wish}
Let $\{r_N\},\; \{s_N\} \subset \N$ be such that $s_N = O\(N^{2/3}\)$ with $s_N \leqslant r_N$, and $\Y_N$ be chosen uniformly from $\mathcal{Y}_{N,p_N}^{r_N}$, then
\begin{equation*}
\mathbb{E}\[\Tr{\(\frac{\Y_N^\top\Y_N}{N(1+\sqrt{\gamma})^2}\)^{s_N}}\] \leqslant c_{MP}(\gamma,s_N)N,
\end{equation*}
where $c_{MP}(\gamma,s_N)$ is bounded uniformly over $N$.
\end{lemma}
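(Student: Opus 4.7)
The plan is to mirror the proof of Theorem 3.1 of \cite{peche2009universality}, substituting $r_N$-independence for full independence at the step where the joint law of the matrix entries is used, exactly as in Lemma \ref{lem:main_res_wig}. I would expand
\[
\mathbb{E}\[\Tr{(\Y_N^\top\Y_N)^{s_N}}\] = \sum_{\mathbf{w}} \mathbb{E}\[\prod_{l=1}^{s_N} Y_{k_l,i_{l-1}} Y_{k_l,i_l}\],
\]
with $\mathbf{w}=(i_0,k_1,i_1,\dots,k_{s_N},i_0)$ ranging over closed walks of length $2s_N$ on the bipartite index set $\{1,\dots,p_N\}\times\{1,\dots,N\}$, so each summand is a monomial of total degree $2s_N$ in the $\pm 1/\sqrt{N}$ entries and involves at most $2s_N$ distinct variables.

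The first step would be to isolate the walks whose set $E(\mathbf{w})$ of distinct edges satisfies $|E(\mathbf{w})|\leqslant r_N$. By $r_N$-independence their expectations factorize into Rademacher marginals, so they vanish unless every edge is traversed an even number of times; the surviving all-even walks automatically obey $|E(\mathbf{w})|\leqslant s_N\leqslant r_N$, because $\sum_e m_e=2s_N$ with each $m_e\geqslant 2$. Hence the partial sum over these walks is literally the quantity bounded in Theorem 3.1 of \cite{peche2009universality}, contributing at most $c_{MP}(\gamma,s_N)\cdot N\cdot(N(1+\sqrt{\gamma})^2)^{s_N}$.

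The main obstacle is the residual sum over walks with $|E(\mathbf{w})|>r_N$, for which $r_N$-independence no longer guarantees vanishing. A simple pigeonhole argument shows that such a walk must carry at least $2(|E(\mathbf{w})|-s_N)\geqslant 2$ edges of multiplicity one, so it is of the non-even type that would vanish in the fully random model. To bound its contribution I would combine the trivial estimate $|\mathbb{E}[\prod_e Y_e^{m_e}]|\leqslant N^{-s_N}$ with the topological skeleton classification of Section 3 of \cite{peche2009universality}, which bounds the number of walks of a given type by $N^v$ times a combinatorial factor depending only on $s_N$; the constraint $s_N=O(N^{2/3})$ then keeps the residual $O(N)$, with a constant depending only on $\gamma$ and $s_N$, and it can be absorbed into $c_{MP}(\gamma,s_N)$. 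Carrying out this skeleton count is the part of the argument that most visibly requires attention, since it is where the weaker-than-full-independence assumption has to be paid for.
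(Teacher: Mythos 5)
Your decomposition of the trace expansion into walks with $|E(\mathbf{w})|\leqslant r_N$ versus $|E(\mathbf{w})|>r_N$ is the right first step, and the observation that the first class coincides with the all-even walks of P{\'e}ch{\'e}'s expansion (because all-even walks have $|E(\mathbf{w})|\leqslant s_N\leqslant r_N$) is correct. Where your argument breaks down is the estimate on the residual class. You invoke ``the topological skeleton classification of Section 3 of \cite{peche2009universality}'' to count the bad walks by $N^v$ times an $s_N$-dependent factor and then claim that, together with the trivial per-walk bound $N^{-s_N}$, this keeps the residual $O(N)$. That classification is built for walks in which every edge has multiplicity at least two, which forces $v\leqslant s_N+1$ and hence $N^{v-s_N}\leqslant N$. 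Walks in your residual class have, by your own pigeonhole argument, at least $2(|E(\mathbf{w})|-s_N)$ edges of multiplicity one, so they are precisely the walks excluded from that classification. For such walks the number of distinct edges can be as large as $2s_N$ and the number of distinct vertices as large as $2s_N$ (a simple cycle traversing $2s_N$ distinct edges once is a legitimate closed walk on the bipartite graph), so the vertex-labeling count is of order $N^{s_N}p^{s_N}$ and the trivial estimate gives a residual of order $p^{s_N}=(\gamma N)^{s_N}$, not $O(N)$. The hypothesis $s_N=O(N^{2/3})$ does nothing to repair this.

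The reason this issue is invisible in the pseudo-Wigner analogue (Lemma \ref{lem:main_res_wig}) is that $\Tr{\A_N^{s_N}}$ expands over walks of length $s_N$, hence total edge multiplicity $s_N$, hence at most $s_N\leqslant r_N$ distinct entries in every monomial --- so $r_N$-independence silently kills every odd walk and there is no residual class at all. In the Wishart expansion the total edge multiplicity is $2s_N$, so walks with between $r_N+1$ and $2s_N$ distinct entries escape the independence hypothesis $s_N\leqslant r_N$. To close the gap you would need either a stronger independence level (e.g.\ $2s_N\leqslant r_N$, i.e.\ the same independence level applied to the $2s_N$ matrix entries that actually appear in a term of the trace), or a genuine cancellation argument specific to the construction that controls $\sum_{|E(\mathbf{w})|>r_N}\mathbb{E}[\prod_e Y_e^{m_e}]$ beyond the trivial magnitude bound. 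As it stands the final step of your proposal is not a proof.
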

\begin{proof}
The proof is analogous to that of Theorem 3.1 from \cite{peche2009universality}.
\end{proof}

\section{Spectral Norms}
\label{sec:spect_norm}
Here we present the main results of the article.
\subsection{Pseudo-Wigner Matrices}
\begin{prop}
\label{prop:main_res_wig}
Let $\A_n \in \mathcal{A}_N^{r_N}$ with $\liminf \frac{r_N}{N^\rho} > 0$ for some $\rho \in (0,1]$, then for any $\varepsilon > 0$
\begin{equation}
\|\A_N\| = 1 + o\(\frac{\log^{1+\varepsilon} N}{N^{\min[\rho,2/3]}}\), \quad \text{a.s.}
\end{equation}
\end{prop}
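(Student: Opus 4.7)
The plan is to apply the classical method of high moments together with the Borel--Cantelli lemma, starting from the double inequality
$$\|\A_N\|^{s_N} \leq \Tr{\A_N^{s_N}} \leq N \|\A_N\|^{s_N},$$
valid for every even integer $s_N$. Writing $\tau = \min[\rho,2/3]$, the assumption $\liminf r_N/N^\rho > 0$ provides a constant $c_0 > 0$ with $r_N \geq c_0 N^\tau$ for all large $N$, so any even $s_N \leq c_0 N^\tau$ meets both $s_N \leq r_N$ and $s_N = O\(N^{2/3}\)$, activating Lemmas~\ref{lem:mom_conv_norm_wig} and \ref{lem:main_res_wig}.

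For the upper deviation $\|\A_N\| - 1 \leq o(\log^{1+\varepsilon} N / N^\tau)$ a.s., fix any $\varepsilon' \in (0,\varepsilon)$ and $K>0$, take $s_N$ to be the largest even integer at most $c_0 N^\tau$, and set $\delta_N = K \log^{1+\varepsilon'} N / N^\tau$. Markov's inequality combined with Lemma~\ref{lem:main_res_wig} gives
$$\mathbb{P}\(\|\A_N\| > 1+\delta_N\) \leq \frac{\mathbb{E}[\Tr{\A_N^{s_N}}]}{(1+\delta_N)^{s_N}} \leq \frac{c_W(s_N)\, N}{(1+\delta_N)^{s_N}}.$$
Using $\log(1+\delta_N) \geq \delta_N/2$ for $N$ large, one has $(1+\delta_N)^{s_N} \geq \exp(K c_0 \log^{1+\varepsilon'} N / 2) = N^{(K c_0/2)\log^{\varepsilon'} N}$, which beats any polynomial in $N$, making the right-hand side summable in $N$. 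Borel--Cantelli then yields $\|\A_N\| - 1 \leq K \log^{1+\varepsilon'} N / N^\tau$ a.s.\ for all large $N$; since $\varepsilon' < \varepsilon$ and $K$ are arbitrary, the $o$-statement follows.

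For the lower deviation $1 - \|\A_N\| \leq o(\log^{1+\varepsilon} N / N^\tau)$ a.s., the idea is to compare $\Tr{\A_N^{s_N}} \leq N \|\A_N\|^{s_N}$ with the expected-trace asymptotic $\mathbb{E}[\Tr{\A_N^{s_N}}] = \sqrt{8/(\pi s_N^3)}\, N(1+o(1))$ of Lemma~\ref{lem:mom_conv_norm_wig}. Picking an even $s_N$ of order $N^\tau / \log^{\varepsilon/2} N$, one has $s_N = o(N^{2/3})$ and $(3/2)\log s_N / s_N = O(\log^{1+\varepsilon/2} N / N^\tau)$. If $\|\A_N\| < 1 - \delta_N$ with $\delta_N$ slightly exceeding $(3/2)\log s_N/s_N$, then $\Tr{\A_N^{s_N}} \leq N(1-\delta_N)^{s_N}$ falls below $\mathbb{E}[\Tr{\A_N^{s_N}}]/2$, so the event is contained in the event that the trace is at most half its mean. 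A Markov bound on the $2\beta_N$-th central moment, with $\beta_N$ growing slowly with $N$ and using the Gaussian moment limit of Lemma~\ref{lem:mom_conv_norm_wig}, produces a summable tail, and Borel--Cantelli closes the argument.

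The delicate point is the concentration step in the lower deviation at the edge case $\tau = 2/3$: the expected trace then grows only by logarithmic factors in $N$, so neither Chebyshev nor any fixed-order central moment bound is strong enough, and Lemma~\ref{lem:mom_conv_norm_wig} must be invoked with $\beta_N \to \infty$, checking carefully that the convergence of the $2\beta_N$-th central moment to the Gaussian moment is sufficiently uniform in the growing $\beta_N$ for the resulting Markov bound to remain summable in $N$.
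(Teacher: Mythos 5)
Your upper-deviation argument is essentially the paper's: you power up to an even $s_N$ of order $N^{\min[\rho,2/3]}$ (up to a $\log^{\varepsilon/2}N$ correction), apply Markov's inequality to $\Tr{\A_N^{s_N}}$ using the uniform moment bound of Lemma~\ref{lem:main_res_wig}, observe that $(1+\delta_N)^{s_N}$ grows super-polynomially so the tail is summable, and invoke Borel--Cantelli. The constant tracking ($K$, $\varepsilon'<\varepsilon$) is a cosmetic variation of the paper's choice $q_N = 2\floor{\tfrac12 N^\rho/\log^{\varepsilon/2}N}$; the mechanism is identical, including the split between the $\rho\leq 2/3$ and $\rho>2/3$ regimes.

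Your lower-deviation argument is genuinely different from the paper's. The paper combines Lemma~\ref{lem:mom_conv_norm_wig} with the sandwich $\|\A_N\|\leq\Tr{\A_N^q}^{1/q}\leq N^{1/q}\|\A_N\|$ to argue that $\mathbb{E}[\|\A_N\|]\to 1$ at a fast rate, and then passes from this expectation bound directly to the almost-sure lower bound in a single terse step. You instead argue at the level of the trace: if $\|\A_N\|<1-\delta_N$ with $\delta_N$ just above $(3/2)\log s_N/s_N$, then $\Tr{\A_N^{s_N}}\leq N(1-\delta_N)^{s_N}$ drops below half the expected trace $\sqrt{8/(\pi s_N^3)}\,N(1+o(1))$, so the event is contained in a large-deviation event for the trace, which you then try to control via the higher central moments of Lemma~\ref{lem:mom_conv_norm_wig} and Borel--Cantelli. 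This trace-concentration route is more concrete and, in principle, better matched to what Lemma~\ref{lem:mom_conv_norm_wig} actually provides. However, the ``delicate point'' you flag is a genuine gap, not a formality: Lemma~\ref{lem:mom_conv_norm_wig} states convergence of each of the first $2\beta_N$ central moments, but to make your Markov bound on the $2\beta_N$-th central moment summable (especially in the $\tau=2/3$ case where the expected trace grows only polylogarithmically) you need $\beta_N\to\infty$ with $N$, and then the lemma as stated gives no uniformity over the growing moment index. As written, the tail estimate is not established. The paper's own lower-deviation passage is even more compressed (and appears to carry a sign typo, writing $\geq 1+\cdots$ where $\geq 1-\cdots$ must be intended), so your version at least makes explicit what quantitative control on the trace fluctuations would be required; but neither your argument nor your reading of Lemma~\ref{lem:mom_conv_norm_wig} closes that step.
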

\begin{proof}
For simplicity, let us start with the case $\rho \leqslant \frac{2}{3}$.
Given $\varepsilon > 0$, set
\begin{equation}
q_N = 2 \floor*{\frac{1}{2}\frac{N^\rho}{\log^{\varepsilon/2}N}}.
\end{equation}
Using Markov's inequality we obtain the following chain of bounds,
\begin{align}
\label{eq:prob_chain}
\mathbb{P}&\bigg\{\|\A_N\| \geqslant 1 + \frac{\log^{1+\varepsilon}N}{N^\rho}\bigg\} \\
& \leqslant \mathbb{P}\bigg\{\Tr{\A_N^{q_N}} \geqslant \(1 + \frac{\log^{1+\varepsilon}N}{N^\rho}\)^{q_N}\bigg\} \nonumber \\
& = \mathbb{P}\bigg\{\Tr{\A_N^{q_N}} \geqslant \(1 + \frac{\log^{1+\varepsilon}N}{N^\rho}\)^{2\[\frac{1}{2}\frac{N^\rho}{\log^{\varepsilon/2}N}\]}\bigg\} \nonumber \\
& \leqslant \mathbb{P}\bigg\{\Tr{\A_N^{q_N}} \geqslant \frac{1}{2}\exp\(\log^{1+\varepsilon/2} N\)\bigg\} \nonumber \\
& \leqslant \frac{\mathbb{E}\[\Tr{\A_N^{q_N}}\]}{\frac{1}{2}\exp\(\log^{1+\varepsilon/2} N\)} = O\(N \exp\(-\log^{1+\varepsilon/2} N\)\), \nonumber
\end{align}
where the last line follows from Lemma \ref{lem:main_res_wig}. This implies
\begin{equation}
\sum_{N=1}^\infty \mathbb{P}\bigg\{\|\A_N\| \geqslant 1 + \frac{\log^{1+\varepsilon}N}{N^\rho}\bigg\} < + \infty.
\end{equation}
It now follows from Borel-Cantelli lemma that
\begin{equation}
\label{eq:one_dir_BC}
\|\A_N\| \leqslant 1 + \frac{\log^{1+\varepsilon}N}{N^\rho},\quad \text{a.s.}
\end{equation}
In order to get the opposite direction inequality, note that Lemma \ref{lem:mom_conv_norm_wig} together with the linear algebraic relation
\begin{equation}
\|\A_N\| \leqslant \Tr{\A_N^q}^{1/q} \leqslant N^{1/q}\|\A_N\|,
\end{equation}
give
\begin{equation}
\mathbb{E}\[\|\A_N\|\] = 1 + o\(\frac{1}{N^\kappa}\),
\end{equation}
for any fixed positive $\kappa$ and therefore,
\begin{equation}
\|\A_N\| \geqslant 1 + \frac{\log^{1+\varepsilon}N}{N^\rho},\quad \text{a.s.}
\end{equation}
which together with (\ref{eq:one_dir_BC}) implies the desired statement.

Assume now that $\rho > \frac{2}{3}$. We know from \cite{soshnikov1999universality} that Lemma \ref{lem:mom_conv_norm_or} is no longer valid in this case and the expected traces can grow faster that $O(N)$. Therefore, to keep the first ratio in the last line of (\ref{eq:prob_chain}) bounded by a summable sequence, the largest (in order) possible choice for $q_N$ is
\begin{equation}
q_N = 2 \floor*{\frac{1}{2}\frac{N^{2/3}}{\log^{\varepsilon/2}N}}.
\end{equation}
Now the same reasoning as above together with Lemma \ref{lem:main_res_wig} complete the proof.
\end{proof}

%\begin{prop}
%\label{prop:clt_wig}
%Let $\A_n \in \mathcal{A}_N^{r_N}$ with $\liminf \frac{r_N}{N^\rho} > 0$ for some $\rho > 0$, and assume that $g(z)$ is a complex function analytic on the closed disk $|z| \leqslant 1$. Then the random variable
%\begin{equation}
%\sum_{i=1}^N g(\lambda_i) - \mathbb{E}\[\sum_{i=1}^N g(\lambda_i)\]
%\end{equation}
%converges in distribution to the Gaussian random variable
%\begin{equation}
%\mathcal{N}\(0,\sigma_{W}(g)\),
%\end{equation}
%where the exact value of $\sigma_{W}(g)$ is given in Theorem 1 of \cite{shcherbina2011central} if the Fourier transform $\hat{g}(x)$ of $g(x)$ satisfies the condition
%\begin{equation}
%\int (1+2|k|)^{2s}|\hat{g}(k)|^2dk \leqslant +\infty,
%\end{equation}
%for some $s>\frac{3}{2}$.
%\end{prop}
%\begin{proof}
%The proof from \cite{sinai1998central} applies with a small variation in equation (5.4) on page 22. In our case the range in the second sum of the last expression should extend to for example $N^{\rho/2}$.
%\end{proof}

\subsection{Pseudo-Wishart Matrices}
\begin{prop}
Let $\Y_N$ be chosen uniformly from $\mathcal{Y}_{N,p_N}^{r_N}$ for some $\rho \in (0,1]$, then for any $\varepsilon > 0$
\begin{equation}
\norm{\frac{\Y_N^\top\Y_N}{(1+\sqrt{\gamma})^2}} = 1 + o\(\frac{\log^{1+\varepsilon} N}{N^{\min[\rho,2/3]}}\), \quad \text{a.s.}
\end{equation}
\end{prop}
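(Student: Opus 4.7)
The plan is to transcribe the argument of Proposition \ref{prop:main_res_wig} essentially line for line, writing $\S_N := \Y_N^\top\Y_N/(1+\sqrt{\gamma})^2$ and replacing the pseudo-Wigner moment lemmas (\ref{lem:mom_conv_norm_wig}--\ref{lem:main_res_wig}) by their pseudo-MP counterparts (\ref{lem:mom_conv_norm_wish}--\ref{th:main_res_wish}). Because $\S_N$ is positive semidefinite, $\norm{\S_N}$ equals its largest eigenvalue, so Markov's inequality applied to $\Tr{\S_N^{q_N}}$ for an even integer $q_N$ controls the upper tail of $\norm{\S_N}$ directly, just as in the Wigner case.

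For the upper bound in the regime $\rho \leq 2/3$, I would set $q_N = 2\floor*{\tfrac{1}{2}\,N^\rho/\log^{\varepsilon/2}N}$, then use $\bigl(1+\log^{1+\varepsilon}N/N^\rho\bigr)^{q_N} \geq \tfrac{1}{2}\exp\bigl(\log^{1+\varepsilon/2}N\bigr)$ together with Lemma \ref{th:main_res_wish} (applicable because $q_N = O(N^{2/3})$ and eventually $q_N \leq r_N$) to obtain a Markov tail of order $O\bigl(N\exp(-\log^{1+\varepsilon/2}N)\bigr)$. This is summable in $N$, so Borel--Cantelli delivers the a.s.\ upper bound. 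For $\rho > 2/3$ one caps $q_N$ at $2\floor*{\tfrac{1}{2}\,N^{2/3}/\log^{\varepsilon/2}N}$, giving the $N^{-2/3}$ rate, since beyond $N^{2/3}$ the expected traces are no longer controlled by Lemma \ref{th:main_res_wish}; this produces the $\min[\rho,2/3]$ exponent in the statement.

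For the matching lower bound, I would choose a slowly growing $s_N$ (for instance $s_N = \lceil \log N \rceil$, comfortably inside the $o(\sqrt{N})$ regime required by Lemma \ref{lem:mom_conv_norm_wish}) and use the linear algebraic sandwich
\[
\norm{\S_N} \leq \Tr{\S_N^{s_N}}^{1/s_N} \leq N^{1/s_N}\norm{\S_N}.
\]
Taking $s_N$-th roots of the asymptotic $\mathbb{E}\bigl[\Tr{\S_N^{s_N}}\bigr] = (1+o(1))\,\tfrac{\gamma^{1/4}(1+\sqrt{\gamma})}{2\sqrt{\pi}}\,N/s_N^{3/2}$ furnished by Lemma \ref{lem:mom_conv_norm_wish}, and noting that $N^{1/s_N} \to 1$ faster than any negative power of $N$ for this choice of $s_N$, yields $\mathbb{E}\norm{\S_N} = 1 + o(N^{-\kappa})$ for every fixed $\kappa > 0$. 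Combined with the upper bound, this pins $\norm{\S_N}$ to within the claimed rate almost surely.

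The main obstacle I anticipate is purely bookkeeping rather than analytic: the statements of Lemmas \ref{lem:mom_conv_norm_wish} and \ref{th:main_res_wish} disagree on normalization (the latter carries an extra factor of $N$ in the denominator that appears extraneous given the $\pm 1/\sqrt{N}$ convention inherited from $\mathcal{X}_{N,p}$), and one must reconcile them before invoking Markov. The proposition also omits the hypothesis $\liminf r_N/N^\rho > 0$ present in Proposition \ref{prop:main_res_wig}, which is implicitly needed to guarantee $q_N \leq r_N$ so that the pseudo-random moment lemmas are legitimately applicable.
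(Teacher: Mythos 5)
Your strategy is exactly the paper's: the authors' proof of this proposition literally reads ``the proof of Proposition~\ref{prop:main_res_wig} works verbatim with Lemmas~\ref{lem:mom_conv_norm_wish} and~\ref{th:main_res_wish} replacing Lemmas~\ref{lem:mom_conv_norm_wig} and~\ref{lem:main_res_wig},'' and you carry out precisely that substitution, with the right cap $q_N \asymp N^{2/3}/\log^{\varepsilon/2} N$ in the $\rho > 2/3$ regime. Your two bookkeeping observations are both correct and worth making explicit: the factor $N$ in the denominator inside Lemma~\ref{th:main_res_wish} is indeed a typo (inconsistent with Lemma~\ref{lem:mom_conv_norm_wish} and with the $\mypm 1/\sqrt{N}$ normalization built into $\mathcal{X}_{N,p}$), and the hypothesis $\liminf r_N/N^\rho > 0$ is missing from this proposition's statement yet is needed for $q_N \leqslant r_N$ to hold eventually.

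However, there is a concrete arithmetical slip in your lower-bound step. With $s_N = \lceil \log N \rceil$ one has $N^{1/s_N} = \exp(\log N / \lceil \log N\rceil) \to e$, a constant strictly greater than $1$, not $1$; in fact $N^{1/s_N} \to 1$ requires $s_N / \log N \to \infty$, and $N^{1/s_N} - 1 = o(N^{-\kappa})$ forces $s_N \gg N^\kappa \log N$, which for $\kappa$ close to $\min[\rho,2/3]$ collides with the constraint $s_N = o(\sqrt N)$ imposed by Lemma~\ref{lem:mom_conv_norm_wish}. So the specific choice $s_N = \lceil \log N\rceil$ does not deliver $\mathbb{E}\|\S_N\| = 1 + o(N^{-\kappa})$ for every fixed $\kappa > 0$, and one should instead take $s_N = N^{\alpha}$ with $\alpha$ just below $\min[\rho,1/2]$ (the largest power allowed by the lemma) — and even then the resulting bound is only $o(N^{-\kappa})$ for $\kappa < \alpha$, so ``every fixed $\kappa$'' overreaches. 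To be fair, the paper's proof of Proposition~\ref{prop:main_res_wig} asserts the same ``for any fixed positive $\kappa$'' without exhibiting the choice of $q$, and also passes from an estimate on $\mathbb{E}\|\A_N\|$ directly to an almost-sure lower bound without elaboration, so you are inheriting a looseness already present in the source; but your instantiation $s_N = \lceil \log N\rceil$ makes a latent imprecision into an outright falsehood, and should be repaired by increasing $s_N$ and tracking the admissible range of $\kappa$.
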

\begin{proof}
The proof of Proposition \ref{prop:main_res_wig} works verbatim with Lemmas \ref{lem:mom_conv_norm_wish} and \ref{th:main_res_wish} replacing Lemmas \ref{lem:mom_conv_norm_wig} and \ref{lem:main_res_wig}, correspondingly.
\end{proof}

%\begin{corollary}
%Let $\Y_N$ be chosen uniformly from $\mathcal{Y}_{N,p_N}^{r_N}$ with $\liminf \frac{r_N}{N^\rho} > 0$ for some $\rho > 0$, and assume that $g(z)$ is a complex function analytic on the closed disk $|z| \leqslant 1$. Then the random variable
%\begin{equation}
%\sum_{i=1}^N g(\lambda_i) - \mathbb{E}\[\sum_{i=1}^N g(\lambda_i)\]
%\end{equation}
%converges in distribution to the Gaussian random variable
%\begin{equation}
%\mathcal{N}\(0,\sigma_{MP}(g)\),
%\end{equation}
%where the exact value of $\sigma_{MP}(g)$ is given in Theorem 2 of \cite{shcherbina2011central} if the Fourier transform $\hat{g}(x)$ of $g(x)$ satisfies the condition
%\begin{equation}
%\int (1+2|k|)^{2s}|\hat{g}(k)|^2dk \leqslant +\infty,
%\end{equation}
%for some $s>\frac{3}{2}$.
%\end{corollary}
%\begin{proof}
%The proof of Proposition \ref{prop:clt_wig} works verbatim.
%\end{proof}

\section{A Construction from Dual BCH codes}
Next we provide an explicit constructions of the pseudo-Wigner and pseudo-MP ensembles from dual BCH codes. The idea was presented in \cite{soloveychik2017pseudo} for the $r$-independent pseudo-Wigner matrices with $r$ of the order of $\log_2 N$. Here we focus on higher levels of independence with $r \propto N^\rho,\; \rho > 0$.

For $m \in \N$, a primitive narrow-sense binary BCH code $\mathcal{C}_m^\delta$ of length $n=2^m-1$ and designed minimum distance $\delta \geqslant 3$ is a cyclic code generated by the lowest degree binary polynomial having roots $\alpha,\;\alpha^2,\dots,\alpha^{\delta-1}$, where $\alpha$ is a primitive element of $GF(2^m)$. 

\begin{lemma}[Theorem 9.1.1, Theorem 9.2.6 from \cite{macwilliams1977theory}]
\label{th:bch_dim}
A primitive narrow-sense binary BCH code $\mathcal{C}_m^\delta$ of length $n=2^m-1$ and designed distance $\delta$ has
\begin{itemize}
\item minimum distance $d$ such that $\delta \leqslant d \leqslant 2\delta-1$, and
\item dimension at least $n-mt$.
\end{itemize}
\end{lemma}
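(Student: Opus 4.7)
The plan is to establish the three assertions of the lemma separately by invoking the classical structure theory of narrow-sense primitive BCH codes over $GF(2^m)$. Let $g(x)$ denote the generator polynomial of $\mathcal{C}_m^\delta$, which by construction is the binary polynomial of least degree having $\alpha,\alpha^2,\dots,\alpha^{\delta-1}$ among its roots, and identify a codeword $c=(c_0,\dots,c_{n-1})$ with the polynomial $c(x)=\sum_{i=0}^{n-1}c_i x^i$ modulo $x^n-1$. The BCH condition $c(\alpha^j)=0$ for $j=1,\dots,\delta-1$ is then the single fact driving the two distance bounds, while the dimension bound is combinatorial.

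For the lower bound $d \geqslant \delta$ (the classical BCH bound), I would suppose $c(x)\neq 0$ has weight $w$ and support $\{i_1,\dots,i_w\}$, then interpret $c(\alpha^j)=0$ for $j=1,\dots,\delta-1$ as a homogeneous linear system in the nonzero coefficients $c_{i_1},\dots,c_{i_w}$. Selecting any $w$ of these equations yields a coefficient matrix whose $(j,k)$ entry is $\alpha^{i_k j}$; factoring $\alpha^{i_k}$ out of each column exposes a Vandermonde matrix in the distinct nodes $\alpha^{i_1},\dots,\alpha^{i_w}\in GF(2^m)^\times$. When $w \leqslant \delta-1$ this matrix is invertible, forcing $c_{i_k}=0$ for all $k$ and contradicting $c\neq 0$, whence $d \geqslant \delta$.

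For the dimension bound, the key observation is that the Frobenius automorphism $x \mapsto x^2$ of $GF(2^m)$ over $GF(2)$ shows that $\alpha^{2k}$ and $\alpha^k$ share the same binary minimal polynomial. Hence $g(x)$ is the least common multiple of the minimal polynomials of the odd powers $\alpha^j$ with $j \in \{1,3,\dots,2t-1\}$ only, where $t = \lfloor(\delta-1)/2\rfloor$. Since each binary minimal polynomial of an element of $GF(2^m)$ has degree at most $m$, one obtains $\deg g(x) \leqslant mt$, and consequently $\dim \mathcal{C}_m^\delta = n - \deg g(x) \geqslant n - mt$.

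The upper bound $d \leqslant 2\delta - 1$ is the most delicate piece and where I expect the main obstacle: unlike the lower bound and the dimension estimate, which follow from structural facts, this is an existence statement requiring an explicit low-weight codeword. The standard route is to exploit the cyclic structure together with the specific gap pattern of cyclotomic cosets immediately adjacent to $\{1,\dots,\delta-1\}$, producing a codeword of weight at most $2\delta-1$ either by multiplying out a short product of factors of $x^n-1$ or by analyzing the idempotent of the code. Rather than reconstructing this argument in detail, I would, as the lemma explicitly does, invoke Theorems 9.1.1 and 9.2.6 of \cite{macwilliams1977theory} where this is carried out in the general framework.
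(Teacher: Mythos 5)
The paper states this lemma purely as a citation to MacWilliams and Sloane and offers no proof of its own, so your plan --- sketching the BCH lower bound via the Vandermonde argument, sketching the dimension bound via cyclotomic cosets and degree-at-most-$m$ minimal polynomials, and deferring the upper bound $d \leqslant 2\delta - 1$ to the cited theorem --- is fully consistent with the paper's treatment, and both of your sketches are correct. One small caveat: the paper leaves $t$ undefined; your reading $t = \lfloor(\delta-1)/2\rfloor$ matches the standard convention in which $\delta = 2t+1$ is odd, but for even $\delta$ the odd-index cyclotomic representatives among $\{1,\dots,\delta-1\}$ number $t+1$, so your list $\{1,3,\dots,2t-1\}$ would omit $\alpha^{\delta-1}$ and the degree bound $\deg g \leqslant mt$ would need to become $m(t+1)$.
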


Under the same assumptions as in Lemma \ref{th:bch_dim}, the dual BCH code is a cyclic code of dimension $k^\perp \leqslant mt$  \cite{macwilliams1977theory}.

\begin{lemma}[Lemma 3.2 from \cite{babadi2011spectral}]
\label{lem:indep_source}
If a code $\mathcal{C}$ has minimum distance $d$, then its dual code $\mathcal{C}^\perp$ is $(d-1)$-independent (see Definition \ref{def:ps_wig}) w.r.t. to the uniform measure over its codewords. 
\end{lemma}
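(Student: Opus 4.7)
The plan is to reduce the claim to the standard fact that for a linear code, the projection of the codewords onto any subset of coordinates is either surjective onto $\mathbb{F}_2^{|I|}$ (with all fibers of equal size, giving uniform marginals on those coordinates) or forces a nontrivial linear relation. Since Definition \ref{def:ps_wig} asks for joint independence of any $r = d-1$ entries of a $\pm 1$ sequence, and the bijection $b \mapsto (-1)^b$ carries the uniform distribution on $\mathbb{F}_2^r$ to the product Rademacher distribution on $\{\pm 1\}^r$, it suffices to prove that for any index set $I = \{i_1,\dots,i_{d-1}\} \subset [1,n]$, the projection
\[
\pi_I : \mathcal{C}^\perp \longrightarrow \mathbb{F}_2^{d-1}, \qquad \c \mapsto (c_{i_1},\dots,c_{i_{d-1}}),
\]
is surjective.

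First I would observe that $\pi_I$ is $\mathbb{F}_2$-linear, so its image is a subspace of $\mathbb{F}_2^{d-1}$ and all of its fibers are cosets of $\ker \pi_I$ and thus have the same cardinality. Consequently, if the image equals all of $\mathbb{F}_2^{d-1}$, then a uniformly chosen $\c \in \mathcal{C}^\perp$ induces the uniform measure on $\mathbb{F}_2^{d-1}$ via $\pi_I$, which is exactly the product of $d-1$ independent uniform bits, i.e., $(d-1)$-independence for the associated $\pm 1$ sequence.

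The core step is to rule out $\pi_I$ being non-surjective. Suppose for contradiction that $\mathrm{Im}\,\pi_I$ is a proper subspace of $\mathbb{F}_2^{d-1}$. Then there exists a nonzero vector $\v = (v_1,\dots,v_{d-1}) \in \mathbb{F}_2^{d-1}$ orthogonal (in the standard inner product) to every element of $\mathrm{Im}\,\pi_I$. Lift $\v$ to $\w \in \mathbb{F}_2^n$ by placing $v_j$ in position $i_j$ and zeros elsewhere; this $\w$ is nonzero and has Hamming weight at most $d-1$. For every $\c \in \mathcal{C}^\perp$, $\langle \w, \c \rangle = \langle \v, \pi_I(\c)\rangle = 0$, so $\w \in (\mathcal{C}^\perp)^\perp = \mathcal{C}$. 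But $\w$ is a nonzero codeword of $\mathcal{C}$ with weight strictly less than $d$, contradicting the minimum distance assumption.

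I do not expect a real obstacle here; the proof is entirely classical linear algebra over $\mathbb{F}_2$ combined with the duality identity $(\mathcal{C}^\perp)^\perp = \mathcal{C}$. The only care-points are (i) to invoke equality of fiber sizes so that surjectivity of $\pi_I$ actually delivers a \emph{uniform} joint distribution rather than merely a \emph{supported-everywhere} one, and (ii) to note that the switch from the $\{0,1\}$ description natural to coding theory to the $\{\pm 1\}$ alphabet used in Definition \ref{def:ps_wig} is an entry-wise bijection and therefore preserves joint independence and uniform marginals, so the conclusion transfers immediately.
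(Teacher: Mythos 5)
Your argument is correct and is the standard proof of this fact. The paper in question does not include its own proof of this lemma; it cites it verbatim from \cite{babadi2011spectral} (Lemma 3.2 there), where essentially the same linear-algebraic/duality argument appears: surjectivity of the coordinate projection $\pi_I$ for every $|I| = d-1$ is equivalent to $\mathcal{C} = (\mathcal{C}^\perp)^\perp$ containing no nonzero word of weight $\leqslant d-1$, and the equal-fiber-size observation converts surjectivity into the uniform product distribution on $\{\mypm 1\}^{d-1}$. Your two care-points (equal fibers giving genuine uniformity, and the harmless $\mathbb{F}_2 \to \{\mypm 1\}$ change of alphabet) are exactly the right ones and are handled correctly.
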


Given these results, the pseudo-Wigner matrices are built as explained in Section as IV of \cite{soloveychik2017pseudo}. Pseudo-MP matrices are constructed analogously, by first packing the codewords of the dual BCH code row by row into  rectangular $N \times p$ matrices $\Y_N$ scaled by $\frac{1}{\sqrt{N}}$. Then the desired SCMs are obtained as $\Y_N^\top\Y_N$.
%$$ follows. We choose $N$ and $\rho$ and pick $m$ such that $2^{m-1}-1 < \frac{N(N+1)}{2} \leqslant 2^m-1$. Then we construct a BCH code of length $n=2^m-1$ and the designed minimum distance $\delta = N^\rho$. We obtain the dual code (see Section IV of \cite{soloveychik2017pseudo} for details), randomly pick a codeword $\c$ from it and shift and scale it as
%\begin{equation}
%\hat{c} = 2\(\c-\frac{1}{2}\bm{1}\)
%\end{equation}
%to get a $\mypm 1$ sequence (here $\bm{1}$ is a vector of all ones of an appropriate size). Then we pack $\hat{c}$ into the upper triangular part of a matrix $\A_N$ including the main diagonal, reflect w.r.t. the diagonal and multiply by $\frac{1}{2\sqrt{N}}$. For the pseudo-MP case, we choose $m$ such that $2^{m-1}-1 < Np \leqslant 2^m-1$ and then follow the same steps to get the dual BCH code and pack its randomly chosen codewords transformed like $\hat{\c}$ into rectangular $N \times p$ matrices row by row. After multiplication by $\frac{1}{\sqrt{N}}$ we obtain matrix $\Y_N$ which gives us the desired SCM in the form of $\Y_N^\top\Y_N$.

\section{Numerical Simulations}
\label{sec:num}
To illustrate the results obtained in Section \ref{sec:spect_norm}, we constructed a BCH code of length $n=2^{14}-1=16383$ and minimum distance $15$ (the generating polynomial was computed by calling \texttt{bchgenpoly(16383,16173)} function of \textsc{Matlab}). Using the obtained polynomial, we calculated the generating polynomial of the dual code as explained in \cite{soloveychik2017pseudo} and randomly chose $10^5$ words from the dual code. These codewords were packed into $180 \times 180$ symmetric sign matrices as described in Section as IV of \cite{soloveychik2017pseudo}. In Figure \ref{fig} the empirical distribution of the spectral norms of the obtained pseudo-Wigner matrices (dBCH curve in the picture) is compared to the theoretical limit for the truly random matrices, the so-called Tracy-Widom distribution \cite{tracy1994level}.

\begin{figure}[!t]
\hspace{-0.5cm}
\includegraphics[width=3.6in]{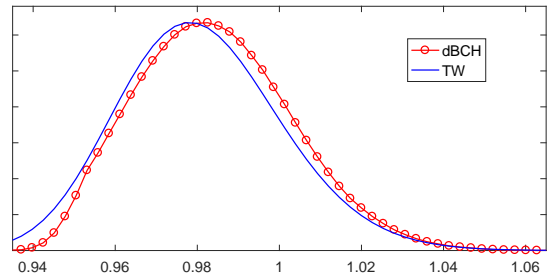}
\vspace{-0.5cm}
\caption{Distribution of norms of pseudo-Wigner matrices constructed from a dual BCH code, $N = 180,\; m=14,\; d=15$ versus Tracy-Widom law.}
\label{fig}
\end{figure}

\section{Conclusions}
\label{sec:conc}
In this article, we extend the framework of pseudo-Wigner matrices introduced in \cite{soloveychik2017pseudo} to a new family of pseudo-$\,$Marchenko-Pastur ensembles. The definitions of both classes of matrices are based on the concept of $r$-independence of the matrix entries to mimic the behavior of the truly random Wigner and sample covariance ensembles, correspondingly. The designed properties of these pseudo-random ensembles allow us to derive approximations of the expected moments similar to those for corresponding truly random matrices, which further enables us to achieve bounds on the spectral norms of the pseudo-Wigner and pseudo-MP ensembles as functions of the level of independence $r$. We also provide explicit constructions of pseudo-Wigner and pseudo-MP ensembles from dual BCH codes. 

\section{Acknowledgment}
This work was supported by the Fulbright Foundation and Army Research Office grant No. W911NF-15-1-0479.

\bibliographystyle{IEEEtran}
\bibliography{ilya_bib}
\end{document}